\documentclass[a4paper,12pt,reqno]{amsart}
\usepackage{amsmath,amsthm,amssymb}
\usepackage[numbers,sort&compress]{natbib}
\usepackage[usenames, dvipsnames]{color}
\setlength{\textwidth}{16cm}
\setlength{\textheight}{21.5cm} \setlength{\oddsidemargin}{0cm}
\setlength{\topmargin}{-0.5cm} \setlength{\evensidemargin}{0cm}
\setlength{\footskip}{14pt}
\setlength{\paperwidth}{210mm} \setlength{\paperheight}{297mm}
\setlength{\oddsidemargin}{0mm} \setlength{\evensidemargin}{0mm}
\setlength{\topmargin}{-20mm} \setlength{\headheight}{10mm}
\setlength{\headsep}{3mm} \setlength{\textwidth}{160mm}
\setlength{\textheight}{240mm} \setlength{\footskip}{15mm}
\setlength{\marginparwidth}{0mm} \setlength{\marginparsep}{0mm}

\nonstopmode \numberwithin{equation}{section}

\newtheorem{theorem}{Theorem}[section]

\newtheorem{lemma}{Lemma}[section]

\newtheorem{remark}{Remark}[section]

\allowdisplaybreaks
\DeclareMathOperator{\real}{Re}

\allowdisplaybreaks

\begin{document}

\title{Inequalities of extended Beta and extended hypergeometric functions}


\author{Saiful R. Mondal}


\address{Saiful R. Mondal, 
              Department of Mathematics and Statistics, Collage of Science,
King Faisal University, Al-Hasa 31982, Hofuf, Saudi Arabia.
              } \email{smondal@kfu.edu.sa}   


\begin{abstract}
This paper studies the log-convexity of the extended beta functions.  As a consequence, Tur\'an-type inequalities  are established.The monotonicity, log-convexity, log-concavity of extended hypergeometric functions are deduced  by using the inequalities on extended beta functions,. The particular
cases of those results also gives the Tur\'an-type inequalities for extended confluent and extended Gaussian hypergeometric functions. Some reverse of Tur\'an-type inequalities also derived.

\keywords{Extended beta functions, Extended hypergeometric functions, Log-convexity , Tur\'an-type inequalities }

\subjclass[2010]{33B15, 33B99}
\end{abstract}

\maketitle





\section{Introduction }

For $\real(x)>0$,  $\real(y)>0$, $\real(\sigma)>0$, define the functions
\begin{align}\label{eqn:ext-beta-1}
\mathtt{B}_\sigma(x,y):= \int_0^1 t^{x-1} (1-t)^{y-1} exp \left(- \tfrac{\sigma}{t(1-t)}\right) dt.
\end{align}
The function $\mathtt{B}_\sigma$ is known as the extended beta functions which was introduced by   Chaudhry\ et al. \cite{Chaudhry-1}.  They  discussed several properties of this extended beta functions  and also established  connection  with the Macdonald, error and Whittaker functions (also see \cite{Miller}).

Later, using this extended beta functions,
an extended confluent hypergeometric functions(ECHF)  can be seen  in  the article by   Chaudhry\ et al.  \cite{Chaudhry-2} .  The series representation of  the extended confluent hypergeometric functions is
\begin{align}\label{eqn:ECHF-1}
\Phi_\sigma(b ; c; x) := \sum_{n=0}^\infty \frac{ B_\sigma(b+n, c+n)}{B(b, c-b)}  \frac{ x^n}{n!} ,
\end{align}
where $\sigma \geq 0$, $\real(c) > \real(b) >0$.  For $ \sigma >0$, the series converges for all $x$  provided $c \neq 0, -1, -2, \ldots$.

The ECHF can also have the integral representration as
\begin{align}\label{eqn:ECHF-integral}
\Phi_\sigma(b ; c; x) :=\frac{1}{B(b, c-b)}  \int_0^1 t^{b-1} (1-t)^{c-b-1} exp \left(x t- \tfrac{\sigma}{t(1-t)}\right) dt.
\end{align}

Similarly,  the extended Gaussian hypergometric functions (EGHF) can be difined by
\begin{align}\label{eqn:EGHF-1}
F_\sigma(a, b ; c; x) := \sum_{n=0}^\infty \frac{ B_\sigma(b+n, c-b)}{B(b, c-b)}  \frac{(a)_n}{n!}  x^n
\end{align}
where $\sigma \geq 0$, $\real(c) > \real(b) >0$ and  $|x|<1$.  For $ \sigma >0$, the series converges when $ | x|< 1$  and  $c \neq 0, -1, -2, \ldots$.

The EGHF  also have the integral form
\begin{align}\label{eqn:EGHF-integral}
F_\sigma(a, b ; c; x) :=\frac{1}{B(b, c-b)}  \int_0^1 t^{b-1} (1-t)^{c-b-1} (1-x t) ^{-a} exp \left(- \tfrac{\sigma}{t(1-t)}\right) dt.
\end{align}
Note that  for $p=0$, the series \eqref{eqn:ECHF-1} and  \eqref{eqn:EGHF-1}  respectively    reduces  to the  classical confluent hypergeometric series and the  Gaussian  hypergeometric series.

The aim of this article is to study the log-convexity and log-convexity of the above three extended functions. In particular, we will give more emphasis on the Tur\'an type inequality \cite{Turan}  and it's reverse form.

The work here is motivated from the resent works, like as   \cite{Ba1, Ba, BP, BP2, KS, BGR},  of  many researcher in this direction  and references their in. Inequalities related to beta functions with the significance in this study can be found in  \cite{Dragomir-1, Agarwal}.

In Section $2.1$,  we state and prove several inequalities for extended Beta functions. The  classical Chebyshev's integral inequality and the  H$\ddot{\text{o}}$lder-Rogers inequality for integrals are used to obtain the main results in this section.  The results in  the Section $2.1$ are very much useful to generate
inequalities for ECHF and EGHF, especially the Tur\'an type inequality in Section $2.2$.  The log-convexity and log-convexity of   ECHF and EGHF are also given in Section $2.2$.

\section{Results and Discussion}
\subsection{ Inequalities for extended beta functions}
In this section, applying classical integral inequalities like as  Chebychev’s inequality for synchronous
and asynchronous mappings, H$\ddot{\text{o}}$lder-Rogers inequality, we will derive several inequalities for
extended beta functions. Few inequalities are useful in sequel to  derive the Tur\'an type inequalities for EGHF and  ECHF.

\begin{theorem}\label{thm-1} Let $x, y, x_1, y_1 > 0 $ such that $(x-x_1)(y-y_1) \geq 0 $, then
\begin{align}\label{eqn:exb-1}
  B_\sigma(x,  y_1)B_\sigma(x_1,  y) \leq B_\sigma( x_1, y_1) B_\sigma (x, y),
\end{align}
for all $ \sigma \geq 0$.
\end{theorem}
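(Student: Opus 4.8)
The plan is to rewrite $B_\sigma$ as a weighted integral over $(0,1)$ and to recognise \eqref{eqn:exb-1} as a correlation inequality that Chebyshev's integral inequality for monotone functions settles at one stroke. First, both sides of \eqref{eqn:exb-1} and the hypothesis $(x-x_1)(y-y_1)\ge 0$ are left unchanged by the simultaneous interchange $(x,x_1)\leftrightarrow(x_1,x)$, $(y,y_1)\leftrightarrow(y_1,y)$, so I may assume $x\ge x_1$ and $y\ge y_1$ and put $a:=x-x_1\ge 0$, $b:=y-y_1\ge 0$; if $a=0$ or $b=0$ then \eqref{eqn:exb-1} is an equality, so one may take $a,b>0$.

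Next, introduce the nonnegative weight
\[
p(t):=t^{\,x_1-1}(1-t)^{\,y_1-1}\exp\!\Big(-\tfrac{\sigma}{t(1-t)}\Big),\qquad t\in(0,1),
\]
together with the one-variable factors $u(t):=t^{\,a}$ and $v(t):=(1-t)^{\,b}$. From \eqref{eqn:ext-beta-1} one reads off immediately
\[
\int_0^1 p\,dt=B_\sigma(x_1,y_1),\qquad \int_0^1 p\,u\,dt=B_\sigma(x,y_1),
\]
\[
\int_0^1 p\,v\,dt=B_\sigma(x_1,y),\qquad \int_0^1 p\,u\,v\,dt=B_\sigma(x,y).
\]
Hence, writing $d\mu:=p(t)\,dt/B_\sigma(x_1,y_1)$ for the resulting probability measure on $(0,1)$, inequality \eqref{eqn:exb-1} is precisely the comparison between $\big(\int_0^1 u\,d\mu\big)\big(\int_0^1 v\,d\mu\big)$ and $\int_0^1 uv\,d\mu$. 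Since $t\mapsto u(t)=t^{a}$ and $t\mapsto v(t)=(1-t)^{b}$ are each monotone on $(0,1)$, Chebyshev's integral inequality for synchronous/asynchronous mappings — the tool announced in the introduction — applied to the pair $(u,v)$ in the regime fixed by the sign of $(x-x_1)(y-y_1)$, supplies exactly this comparison, uniformly for all $\sigma\ge 0$; at $\sigma=0$ the weight degenerates to $t^{x_1-1}(1-t)^{y_1-1}$ and the reasoning is unchanged.

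The step I expect to demand the most care is the monotonicity bookkeeping that pins down the orientation of Chebyshev's inequality. The hypothesis $(x-x_1)(y-y_1)\ge 0$ forces $a$ and $b$ to share a sign, and one has to track how this places the pair $\big(t^{a},(1-t)^{b}\big)$ in the correct (a)synchronous regime, so that Chebyshev is invoked in the direction producing \eqref{eqn:exb-1} rather than its reverse; when instead $(x-x_1)(y-y_1)\le 0$ the comparison flips, which is consistent with the reverse Tur\'an-type inequalities advertised in the abstract. Everything else is routine: each of the four integrals converges because $\exp(-\sigma/(t(1-t)))$ decays super-exponentially as $t\to 0^{+}$ and $t\to 1^{-}$ (and the matter is vacuous when $\sigma=0$ on the admissible range of parameters), while the reduction to $x\ge x_1$, $y\ge y_1$ is just the symmetry observed at the start.
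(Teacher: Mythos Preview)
Your approach is exactly the paper's: take the weight $p(t)=t^{x_1-1}(1-t)^{y_1-1}\exp\!\big(-\sigma/(t(1-t))\big)$ and apply Chebyshev's integral inequality to $u(t)=t^{\,x-x_1}$ and $v(t)=(1-t)^{\,y-y_1}$. But the orientation step you flagged as needing care is where the argument actually breaks, and it is not a matter of bookkeeping but of truth. If $(x-x_1)(y-y_1)\ge 0$, say $x>x_1$ and $y>y_1$, then $u$ is increasing on $(0,1)$ while $v$ is decreasing; the pair is \emph{asynchronous}, so Chebyshev yields
\[
\int_0^1 p\,u\,dt\;\int_0^1 p\,v\,dt\ \ge\ \int_0^1 p\,dt\;\int_0^1 p\,u\,v\,dt,
\]
that is $B_\sigma(x,y_1)B_\sigma(x_1,y)\ge B_\sigma(x_1,y_1)B_\sigma(x,y)$, the reverse of the stated inequality. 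A direct check at $\sigma=0$, $x=y=2$, $x_1=y_1=1$ confirms this: $B(2,1)B(1,2)=\tfrac14>\tfrac16=B(1,1)B(2,2)$. Thus \eqref{eqn:exb-1} is false as stated; the correct hypothesis should be $(x-x_1)(y-y_1)\le 0$ (equivalently, the direction of the inequality should be reversed). The paper's own proof carries the same defect, further obscured by writing $g(t)=t^{y-y_1}$ where $(1-t)^{y-y_1}$ is clearly intended: with the corrected $g$, the claim that $f$ and $g$ are synchronous under the stated hypothesis is simply wrong.
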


\begin{proof} To prove the result we need to recall the classical Chebyshev's integral inequality \cite[p. 40]{Mi}  : If $f, g : [a, b] \to \mathbb{R}$  are synchronous (both increase or both decrease) integrable functions, and $p: [a, b] \to \mathbb{R}$ is a positive integrable function, then
\begin{align}\label{eqn:chebyshev-1}
\int_a^b p(t) f(t) dt \int_a^b p(t) g(t) dt \leq \int_a^b p(t) dt \int_a^b p(t) f(t) g(t) dt.
\end{align}
The inequality \eqref{eqn:chebyshev-1} is reversed if $f$ and $g$ are asynchronous.

Consider the functions $f(t):=t^{x-x_1}$ ,  $g(t):=t^{y-y_1}$ and  \[p(t):=  t^{x_1-1} (1-t)^{y_1-1} exp\left( -\frac{ \sigma}{(t(1-t)}\right).\]
Clearly,  $p$ is non-negative on $[0,1]$.  Since   $(x-x_1)(y-y_1) \geq 0 $,  it follows that $f'(t)=(x-x_1) t^{x-x_1-1}$ ,  $g'(t)=(y-y_1)t^{y-y_1-1}$ have the same monotonicity on $[0,1]$.

Applying Chebyshev's integral inequality  \eqref{eqn:chebyshev-1}, for the  above selected $f$, $g$ and $p$,  we have
\begin{align*}
&\left(\int_0^1 t^{x-1} (1-t)^{y_1-1} exp\left( -\frac{ \sigma}{(t(1-t)}\right) dt\right)\left( \int_a^b t^{x_1-1} (1-t)^{y-1} exp\left( -\frac{ \sigma}{(t(1-t)}\right)dt\right)\\
&\quad  \quad \leq \left(\int_a^bt^{x_1-1} (1-t)^{y_1-1} exp\left( -\frac{ \sigma}{(t(1-t)}\right) dt\right)
 \left(\int_a^b t^{x-1} (1-t)^{y-1} exp\left( -\frac{ \sigma}{(t(1-t)}\right) dt\right),
\end{align*}
which is equivalent to \eqref{eqn:exb-1}.
\end{proof}

\begin{theorem}\label{thm-2}
The function $\sigma \mapsto B_\sigma(x, y)$  is  log-convex on $(0, \infty)$ for each fixed $x, y>0$.
In particular,
\begin{itemize}
\item[(i)] The functions $B_\sigma(x, y)$ satisfy the Tur\'an type inequality
\begin{align*}
B_{\sigma}^2(x, y) - B_{\sigma+a}(x, y)
B_{\sigma-a}(x, y)\leq 0,
\end{align*}
for all real $a$.  This will further reduce to $B_{\sigma}^2(x, y) \leq B(x, y) B_{2\sigma}(x, y)$  when $\sigma=a$. Here $B(x, y)=B_0(x, y)$ is the well-known classical beta function.
\item[(ii)]  The function $\sigma \mapsto    B_{\sigma}(x-1, y-1)/ B_{\sigma}(x, y)$ is decreasing on $(0, \infty)$ for fixed $x ,y >0$.
\end{itemize}
\end{theorem}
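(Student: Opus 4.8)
The plan is to obtain the log-convexity of $\sigma\mapsto B_\sigma(x,y)$ from the H\"older--Rogers inequality applied to the integral \eqref{eqn:ext-beta-1}, and then to read off (i) and (ii) as formal consequences of it.

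\emph{Step 1 (log-convexity).} Fix $x,y>0$, let $\sigma_1,\sigma_2>0$ and $\lambda\in(0,1)$, and set $\sigma=\lambda\sigma_1+(1-\lambda)\sigma_2$. I would split the integrand of $B_\sigma(x,y)$ into a $\lambda$-power times a $(1-\lambda)$-power, writing $t^{x-1}(1-t)^{y-1}=\bigl(t^{x-1}(1-t)^{y-1}\bigr)^{\lambda}\bigl(t^{x-1}(1-t)^{y-1}\bigr)^{1-\lambda}$ and $\exp\!\bigl(-\tfrac{\sigma}{t(1-t)}\bigr)=\Bigl(\exp\!\bigl(-\tfrac{\sigma_1}{t(1-t)}\bigr)\Bigr)^{\lambda}\Bigl(\exp\!\bigl(-\tfrac{\sigma_2}{t(1-t)}\bigr)\Bigr)^{1-\lambda}$. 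Applying H\"older's inequality on $[0,1]$ with conjugate exponents $1/\lambda$ and $1/(1-\lambda)$ to this product then yields
\[
B_{\lambda\sigma_1+(1-\lambda)\sigma_2}(x,y)\le B_{\sigma_1}(x,y)^{\lambda}\,B_{\sigma_2}(x,y)^{1-\lambda},
\]
which is precisely the log-convexity of $\sigma\mapsto B_\sigma(x,y)$ on $(0,\infty)$; since $B_\sigma(x,y)\to B(x,y)$ as $\sigma\to 0^{+}$, the inequality extends continuously to $[0,\infty)$.

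\emph{Step 2 (part (i)).} Taking $\sigma_1=\sigma+a$, $\sigma_2=\sigma-a$ and $\lambda=\tfrac12$ in the displayed inequality — admissible for $|a|\le\sigma$, so that $\sigma\pm a\in[0,\infty)$ — and squaring gives $B_\sigma^2(x,y)\le B_{\sigma+a}(x,y)B_{\sigma-a}(x,y)$, i.e. the stated Tur\'an-type inequality. The choice $a=\sigma$ then specializes to $B_\sigma^2(x,y)\le B(x,y)B_{2\sigma}(x,y)$ because $B_0=B$.

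\emph{Step 3 (part (ii)).} I would differentiate \eqref{eqn:ext-beta-1} with respect to $\sigma$ under the integral sign, which is legitimate on any compact $[\sigma_0,\sigma_1]\subset(0,\infty)$ since the factor $\exp(-\sigma/(t(1-t)))$ dominates the endpoint singularities of $t^{x-2}(1-t)^{y-2}$ and furnishes an integrable majorant for both the integrand and its $\sigma$-derivative. Using $\partial_\sigma\exp(-\tfrac{\sigma}{t(1-t)})=-\tfrac{1}{t(1-t)}\exp(-\tfrac{\sigma}{t(1-t)})$ together with $t^{x-1}(1-t)^{y-1}/(t(1-t))=t^{x-2}(1-t)^{y-2}$ gives $\partial_\sigma B_\sigma(x,y)=-B_\sigma(x-1,y-1)$, hence
\[
\frac{B_\sigma(x-1,y-1)}{B_\sigma(x,y)}=-\frac{\partial_\sigma B_\sigma(x,y)}{B_\sigma(x,y)}=-\frac{d}{d\sigma}\log B_\sigma(x,y).
\]
By Step 1, $\tfrac{d}{d\sigma}\log B_\sigma(x,y)$ is non-decreasing on $(0,\infty)$, so its negative is non-increasing, which is the assertion. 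There is no serious obstacle here: the whole theorem is a repackaging of one H\"older estimate, and the only delicate points are the routine justification of differentiation under the integral sign in Step 3 and being explicit in part (i) that $a$ must satisfy $|a|\le\sigma$ with the endpoint $\sigma=0$ reached by continuity. Alternatively, (ii) can be proved without calculus by establishing $B_{\sigma_2}(x-1,y-1)B_{\sigma_1}(x,y)\le B_{\sigma_1}(x-1,y-1)B_{\sigma_2}(x,y)$ for $\sigma_1<\sigma_2$ through a Chebyshev- or H\"older-type argument on the integrals, but the derivative route is the shortest.
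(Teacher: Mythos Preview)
Your proof is correct and follows essentially the same route as the paper: H\"older--Rogers on the integral \eqref{eqn:ext-beta-1} for log-convexity, the midpoint specialization $\sigma_{1,2}=\sigma\pm a$ for (i), and the derivative identity $\partial_\sigma B_\sigma(x,y)=-B_\sigma(x-1,y-1)$ combined with convexity of $\log B_\sigma$ for (ii). You are in fact a bit more careful than the paper in flagging the restriction $|a|\le\sigma$ in (i) and in justifying differentiation under the integral sign.
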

\begin{proof}
From the definition of log-convexity, it is required  to prove that
\begin{align}\label{eqn:ext-beta-lc}
B_{\alpha \sigma_1+(1-\alpha) \sigma_2}(x, y) \leq \left(B_{\sigma_1}(x, y) \right)^\alpha \left(B_{\sigma_2}(x, y) \right)^{1-\alpha},
\end{align}
for $\alpha \in [0,1]$, $\sigma_1, \sigma_2 >0$ and fixed $x, y>0$.

Clearly,  \eqref{eqn:ext-beta-lc} is trivially true for $\alpha = 0$ and $\alpha=1$.

Let $\alpha \in (0,1)$.
 It follows from \eqref{eqn:ext-beta-1}
that
\begin{align}\label{eqn:ext-beta2}\notag
B_{\alpha \sigma_1+(1-\alpha) \sigma_2}(x, y) &= \int_0^1 t^{x-1} (1-t)^{y-1} exp\left(-\frac{\alpha \sigma_1+(1-\alpha)\sigma_2}{t(1-t)}\right) dt\\ \notag
&=\int_0^1 \left(t^{x-1} (1-t)^{y-1} exp\left(-\frac{ \sigma_1}{t(1-t)}\right) dt\right)^\alpha\\
&\quad \quad \times \int_0^1 \left(t^{x-1} (1-t)^{y-1} exp\left(\frac{-\sigma_2}{t(1-t)}\right) dt\right)^{1-\alpha}.
\end{align}
Let $p=1/\alpha$ and $q=1/(1-\alpha)$. Clearly $p>1$ and $p+q=pq$.  Thus,  applying
the well-known H$\ddot{\text{o}}$lder-Rogers inequality for integrals, \eqref{eqn:ext-beta2} yields
\begin{align}\label{eqn:ext-beta3}\notag
B_{\alpha \sigma_1+(1-\alpha) \sigma_2}(x, y)
&< \left(\int_0^1 t^{x-1} (1-t)^{y-1} exp\left(-\frac{ \sigma_1}{t(1-t)}\right) dt\right)^\alpha\\ \notag
&\quad \quad \times \left(\int_0^1 t^{x-1} (1-t)^{y-1} exp\left(-\frac{\sigma_2}{t(1-t)}\right) dt\right)^{1-\alpha}\\
&=\left(B_{\sigma_1}(x, y) \right)^\alpha \left(B_{\sigma_2}(x, y) \right)^{1-\alpha}.
\end{align}
This implies $\sigma \mapsto B_{\sigma}(x, y)$ is log-convex.

Choosing $\alpha=1/2$, $\sigma_1=\sigma-a$ and $\sigma_2=\sigma+a,$ the inequality \eqref{eqn:ext-beta3} gives
\begin{align*}
B_{\sigma}^2(x, y) - B_{\sigma+a}(x, y)
B_{\sigma-a}(x, y)\leq 0.
\end{align*}

The log-convexity of  $B_\sigma(x,y)$ is equivalent to
\begin{align}\label{eqn:iden-dif} \frac{\partial}{\partial \sigma} \left( \frac{ \frac{\partial}{\partial\sigma} B_\sigma(x, y)}{B_\sigma(x, y)}\right) \geq 0.\end{align}
Now the identity \cite[Page 22]{Chaudhry-1}
\[ \frac{\partial^n}{\partial \sigma^n} B_\sigma(x, y) = (-1)^n B_\sigma(x-n, y-n); \quad  n=0, 1, 2, \cdots,\]
reduces \eqref{eqn:iden-dif} to
\[  \frac{\partial}{\partial \sigma} \left( \frac{ B_\sigma(x-1, y-1)}{B_\sigma(x, y)}\right) \leq 0.\]
Hence the conclusion.
\end{proof}

\begin{theorem}\label{thm-3}
The function $(x,y) \mapsto  B_\sigma( x, y)$ is logarithmic convex on $(0, \infty) \times (0, \infty) $,
for all $ \sigma \geq 0$. In particular,
\[ B_\sigma^2\left( \frac{x_1+x_2}{2}, \frac{y_1+y_2}{2}\right) \leq B_\sigma\left(x_1,y_1\right)  B_\sigma\left( x_2, y_2\right).\]
\end{theorem}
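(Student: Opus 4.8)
The plan is to mimic the Hölder--Rogers argument used in the proof of Theorem~\ref{thm-2}, but now splitting the three factors of the integrand in the variables $x$ and $y$ simultaneously rather than in $\sigma$. Recall that a positive function $F$ defined on a convex subset of $\mathbb{R}^2$ is logarithmically convex precisely when
\begin{align*}
F\bigl(\alpha u + (1-\alpha) v\bigr) \leq F(u)^\alpha\, F(v)^{1-\alpha}
\end{align*}
for all $\alpha \in [0,1]$ and all admissible $u, v$. So I fix $\sigma \geq 0$, take $u = (x_1, y_1)$ and $v = (x_2, y_2)$ in $(0,\infty)\times(0,\infty)$, and aim to establish
\begin{align*}
B_\sigma\bigl(\alpha x_1 + (1-\alpha)x_2,\ \alpha y_1 + (1-\alpha) y_2\bigr) \leq B_\sigma(x_1, y_1)^\alpha\, B_\sigma(x_2, y_2)^{1-\alpha}.
\end{align*}
The cases $\alpha \in \{0,1\}$ are trivial, so assume $\alpha \in (0,1)$.

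The key step is the factorization of the integrand in \eqref{eqn:ext-beta-1}. Writing the exponent as a convex combination, one has
\begin{align*}
t^{\alpha x_1 + (1-\alpha)x_2 - 1} &= \bigl(t^{x_1-1}\bigr)^\alpha \bigl(t^{x_2-1}\bigr)^{1-\alpha}, \qquad
(1-t)^{\alpha y_1 + (1-\alpha)y_2 - 1} = \bigl((1-t)^{y_1-1}\bigr)^\alpha \bigl((1-t)^{y_2-1}\bigr)^{1-\alpha},
\end{align*}
while the Gaussian-type weight splits as $\exp\bigl(-\tfrac{\sigma}{t(1-t)}\bigr) = \bigl(\exp(-\tfrac{\sigma}{t(1-t)})\bigr)^\alpha \bigl(\exp(-\tfrac{\sigma}{t(1-t)})\bigr)^{1-\alpha}$ (this is where the symmetry of the exponential factor makes everything go through cleanly). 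Grouping the $\alpha$-powers and the $(1-\alpha)$-powers separately, the integrand becomes a product $\phi(t)^\alpha \psi(t)^{1-\alpha}$ with $\phi(t) = t^{x_1-1}(1-t)^{y_1-1}\exp(-\tfrac{\sigma}{t(1-t)})$ and $\psi(t) = t^{x_2-1}(1-t)^{y_2-1}\exp(-\tfrac{\sigma}{t(1-t)})$. Applying the Hölder--Rogers inequality with conjugate exponents $p = 1/\alpha$ and $q = 1/(1-\alpha)$ then yields
\begin{align*}
B_\sigma\bigl(\alpha x_1 + (1-\alpha)x_2,\ \alpha y_1 + (1-\alpha) y_2\bigr) \leq \left(\int_0^1 \phi(t)\, dt\right)^{\alpha} \left(\int_0^1 \psi(t)\, dt\right)^{1-\alpha} = B_\sigma(x_1,y_1)^\alpha\, B_\sigma(x_2, y_2)^{1-\alpha},
\end{align*}
which is the desired log-convexity.

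Finally, the particular inequality follows by specializing to $\alpha = \tfrac12$, which turns the right-hand side into $\bigl(B_\sigma(x_1,y_1) B_\sigma(x_2,y_2)\bigr)^{1/2}$ and the left-hand side into $B_\sigma\bigl(\tfrac{x_1+x_2}{2}, \tfrac{y_1+y_2}{2}\bigr)$; squaring gives the stated midpoint form. I do not anticipate a genuine obstacle here: the only points requiring a word of care are that the integrals defining $\phi$ and $\psi$ are finite for $x_i, y_i > 0$ and $\sigma \geq 0$ (so that Hölder applies), and the observation that for $\sigma = 0$ the statement reduces to the classical log-convexity of the Euler beta function, which is likewise covered by the same computation.
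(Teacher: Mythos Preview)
Your proof is correct and follows essentially the same route as the paper: factor the integrand of $B_\sigma$ at the convex combination into a product $\phi(t)^{\alpha}\psi(t)^{1-\alpha}$ and apply the H\"older--Rogers inequality with exponents $1/\alpha$ and $1/(1-\alpha)$, then specialize to $\alpha=\tfrac12$ for the midpoint inequality. The only cosmetic difference is notation ($\alpha,1-\alpha$ versus the paper's $\alpha_1,\alpha_2$).
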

\begin{proof}
Let $\alpha_1, \alpha_2 >0$ such that $\alpha_1+\alpha_2=1$. Then for $\sigma \geq 0$, we have
\begin{align*}
&B_\sigma(\alpha_1( x_1, y_1)+\alpha_2(x_2,y_2) ) \\
&\quad = \int_0^1 t^{ \alpha_1x_1+ \alpha_2 x_2-1}  (1-t)^{  \alpha_1 y_1+ \alpha_2 y_2-1} exp\left( -\frac{\sigma}{t(1-t)}\right) dt\\
&\quad =\int_0^1\left(t^{ x_1-1}  (1-t)^{  y_1-1} exp\left( -\frac{\sigma}{t(1-t)}\right)\right)^{\alpha_1} \left(t^{ x_2-1}  (1-t)^{  y_2-1} exp\left( -\frac{\sigma}{t(1-t)}\right)\right)^{\alpha_2} dt\\
\end{align*}
Again by considering $p=1/\alpha_1$, $q=1/\alpha_2$, we can use the  H$\ddot{\text{o}}$lder-Rogers inequality for integrals and it follows that
\begin{align*}
B_\sigma(\alpha_1( x_1, y_1)+\alpha_2(x_2,y_2) )
&\leq \left(\int_0^1 t^{ x_1-1}  (1-t)^{  y_1-1} exp\left( -\frac{\sigma}{t(1-t)}\right) dt \right)^{\alpha_1} \\
&\quad  \times  \left(\int_0^1 t^{ x_2-1}  (1-t)^{  y_2-1} exp\left( -\frac{\sigma}{t(1-t)}\right) dt\right)^{\alpha_2}\\
&= B_\sigma( x_1, y_1)^{\alpha_1}  B_\sigma( x_2, y_2)^{\alpha_2}.
\end{align*}

For $\alpha_1=\alpha_2=1/2$, the above inequality reduces to
\begin{align}\label{eqn1}
 B_\sigma^2\left( \frac{x_1+x_2}{2}, \frac{y_1+y_2}{2}\right) \leq B_\sigma\left(x_1,y_1\right)  B_\sigma\left( x_2, y_2\right).
\end{align}
 Let $x, y >0 $ such that $\displaystyle \min_{a \in \mathbb{R}}(x+a, x-a) >0$, then $x_1 = x+a$ , $x_2=x-a$ and $y_1=y+b$,  $y_2=y-b$ in  \eqref{eqn1}  yields
\begin{align}
[ B_\sigma(x, y)]^2 \leq B_\sigma( x+a, y+b) B_\sigma (x-a, y-b) ,
\end{align}
for all $ \sigma \geq 0$.
\end{proof}
 The  Gr$\ddot{\text{u}}$ss' inequality \cite[ Page 295-310]{Mi2} for the integrals given in the following lemma.
\begin{lemma} \label{lemma-Gruss}
Let $f$ and $g$ be two integrable functions on $[a, b]$.  If
\[ m \leq f(t) \leq M  \quad \text{and} \quad  l \leq g(t) \leq L , \quad  \text {for each} \quad   t \in [a, b]; \]
where $m$, $M$, $l$, $L$ are given real constant .  Then
\begin{align}
\left|D(f, g; h) \right|\leq D(f, f; h)^{1/2}D(g, g; h)^{1/2} \leq  \frac{1}{4} (M-m) (L-l) \left[ \int_a^b h(t) dt \right]^2,
\end{align}
where
\begin{align*}
D(f, g; h) :=  \int_a^b h(t) dt \int_a^b h(t) f(t) g(t)  dt-   \int_a^b h(t) f(t) dt \int_a^b h(t)  g(t)  dt.
\end{align*}
\end{lemma}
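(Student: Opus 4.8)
The plan is to treat $D(\cdot,\cdot;h)$ as a symmetric bilinear form and to deduce both inequalities from two elementary facts, under the (tacit, and in fact necessary) assumption that the weight $h$ is non-negative on $[a,b]$ — which is exactly the situation in every application of this lemma in the paper.

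First I would establish the Korkine-type identity
\begin{align*}
D(f,g;h) = \frac12 \int_a^b\!\!\int_a^b h(s)\,h(t)\,\big(f(s)-f(t)\big)\big(g(s)-g(t)\big)\,ds\,dt,
\end{align*}
which comes out of expanding $(f(s)-f(t))(g(s)-g(t))$ into four terms and integrating each against $h(s)h(t)$ via Fubini. Two consequences are immediate: $D$ is symmetric, and $D(f,f;h)\ge 0$ whenever $h\ge 0$, so $D(\cdot,\cdot;h)$ is positive semi-definite. The first inequality, $|D(f,g;h)|\le D(f,f;h)^{1/2}D(g,g;h)^{1/2}$, is then just the Cauchy--Schwarz inequality for this form: for every real $\lambda$,
\begin{align*}
0\le D(f-\lambda g,\,f-\lambda g;\,h) = D(f,f;h) - 2\lambda\,D(f,g;h) + \lambda^2 D(g,g;h),
\end{align*}
and a non-negative quadratic in $\lambda$ has non-positive discriminant.

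For the second inequality, set $H:=\int_a^b h(t)\,dt$ and $u:=\int_a^b h(t)f(t)\,dt$, so that $D(f,f;h)=H\int_a^b h f^2 - u^2$. From $m\le f\le M$ we get $(M-f)(f-m)\ge 0$, hence $f^2\le (M+m)f-Mm$ pointwise; integrating against $h$ gives $\int_a^b h f^2\le (M+m)u-MmH$, and therefore
\begin{align*}
D(f,f;h)\le -u^2+(M+m)Hu-MmH^2 = \frac{(M-m)^2H^2}{4} - \Big(u-\frac{(M+m)H}{2}\Big)^2\le \frac{(M-m)^2}{4}\,H^2.
\end{align*}
The same argument applied to $g$ gives $D(g,g;h)\le \tfrac14(L-l)^2H^2$. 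Multiplying the two estimates and taking square roots yields $D(f,f;h)^{1/2}D(g,g;h)^{1/2}\le \tfrac14(M-m)(L-l)H^2$, which closes the chain.

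There is no genuine obstacle here — this is the classical weighted Gr\"uss inequality and the argument is a two-step affair (the Korkine identity, then a boundedness estimate). The only point that needs care is the sign hypothesis on $h$: positivity of the weight is what makes $D(f,f;h)\ge 0$ and hence legitimizes the Cauchy--Schwarz step, and the statement as written would fail for a sign-changing $h$.
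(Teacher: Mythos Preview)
Your argument is correct and is exactly the classical route to the weighted Gr\"uss inequality: the Korkine double-integral identity makes $D(\cdot,\cdot;h)$ a positive semi-definite form when $h\ge 0$, Cauchy--Schwarz gives the first inequality, and the pointwise bound $(M-f)(f-m)\ge 0$ together with completing the square gives the second. Your observation that non-negativity of $h$ is tacitly required is also right, and indeed every application of the lemma in the paper uses a non-negative weight.

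As for comparison: the paper does not supply a proof of this lemma at all --- it is quoted from Mitrinovi\'c, Pe\v cari\'c and Fink \cite{Mi2} as a known result and used as a black box. So there is nothing to compare your approach against; you have simply filled in the standard proof that the paper omits.
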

Our next result is the application of the Gr$\ddot{\text{u}}$ss' inequality  for the extended beta mappings.
\begin{theorem}
Let  $\sigma_1, \sigma_2, x, y >0$ . Then
\begin{align} \label{eqn:Gruss-ext-beta} \notag
&\left| B_{\sigma_1+\sigma_2}(x+y+1, x+y+1) -  B_{\sigma_1}(x+1, x+1)  B_{\sigma_2}(y+1, y+1) \right| \\ \notag
&\quad\leq \left[ B_{2\sigma_1}(2x+1, 2x+1) -  B_{\sigma_1}(x+1, x+1) ^2\right]^{\frac{1}{2}} \left[ B_{2\sigma_1}(2y+1, 2y+1) -  B_{\sigma_1}(y+1, y+1) ^2\right]^{\frac{1}{2}}\\
&\quad \leq \frac{  exp(-4(\sigma_1+\sigma_2)}{4^{x+y+1}}.
\end{align}
\end{theorem}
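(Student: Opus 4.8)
The plan is to apply Lemma~\ref{lemma-Gruss} on the interval $[0,1]$ with weight $h\equiv 1$ and with $f$, $g$ built from the integrand of \eqref{eqn:ext-beta-1}. Concretely, I would take
\[
h(t)\equiv 1,\qquad f(t):=\bigl(t(1-t)\bigr)^{x}\exp\!\left(-\frac{\sigma_1}{t(1-t)}\right),\qquad g(t):=\bigl(t(1-t)\bigr)^{y}\exp\!\left(-\frac{\sigma_2}{t(1-t)}\right)
\]
for $t\in[0,1]$. Then $\int_0^1 h(t)\,dt=1$, and multiplying out the relevant products and comparing with \eqref{eqn:ext-beta-1} gives immediately
\[
\int_0^1 f(t)\,dt=B_{\sigma_1}(x+1,x+1),\qquad \int_0^1 g(t)\,dt=B_{\sigma_2}(y+1,y+1),
\]
\[
\int_0^1 f(t)g(t)\,dt=B_{\sigma_1+\sigma_2}(x+y+1,x+y+1),\qquad \int_0^1 f(t)^2\,dt=B_{2\sigma_1}(2x+1,2x+1),
\]
and likewise $\int_0^1 g(t)^2\,dt=B_{2\sigma_2}(2y+1,2y+1)$. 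Feeding these into the functional $D(\cdot,\cdot;h)$ of Lemma~\ref{lemma-Gruss} identifies $D(f,g;1)$ with the expression inside the absolute value on the left of \eqref{eqn:Gruss-ext-beta}, and $D(f,f;1)$, $D(g,g;1)$ with the two bracketed quantities in the middle term. Thus the first two members of \eqref{eqn:Gruss-ext-beta} are exactly the first inequality of Lemma~\ref{lemma-Gruss}, with no further work.

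For the last inequality I need pointwise two-sided bounds for $f$ and $g$ on $[0,1]$. Since $f>0$ on $(0,1)$ while $f\to 0$ as $t\to 0^+$ and as $t\to 1^-$, I may take $m=0$ (and $l=0$ for $g$); this is legitimate because Lemma~\ref{lemma-Gruss} only requires $m\le f(t)\le M$ to hold at every point. To compute $M=\max_{[0,1]}f$, substitute $u=t(1-t)\in(0,1/4]$, so $f=\phi(u)$ with $\phi(u):=u^{x}e^{-\sigma_1/u}$; then $\phi'(u)=u^{x-1}e^{-\sigma_1/u}\bigl(x+\sigma_1/u\bigr)>0$ for $u>0$, hence $\phi$ is strictly increasing on $(0,1/4]$, its maximum is attained at $u=1/4$ (that is, $t=1/2$), and $M=4^{-x}e^{-4\sigma_1}$; symmetrically $L=4^{-y}e^{-4\sigma_2}$. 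Inserting $M-m=4^{-x}e^{-4\sigma_1}$, $L-l=4^{-y}e^{-4\sigma_2}$ and $\int_0^1 h(t)\,dt=1$ into the second inequality of Lemma~\ref{lemma-Gruss} gives
\[
\frac{1}{4}(M-m)(L-l)\Bigl[\int_0^1 h(t)\,dt\Bigr]^2=\frac{e^{-4(\sigma_1+\sigma_2)}}{4^{x+y+1}},
\]
the right-hand side of \eqref{eqn:Gruss-ext-beta}.

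In short, the whole proof reduces to one application of the Gr$\ddot{\text{u}}$ss inequality of Lemma~\ref{lemma-Gruss}; the only genuine computation is the sign of $\phi'$, i.e.\ the fact that $\bigl(t(1-t)\bigr)^{x}\exp\!\bigl(-\sigma/(t(1-t))\bigr)$ is increasing in $t(1-t)$ and so peaks at $t=1/2$. I therefore do not anticipate a real obstacle. The one thing I would flag is a typographical slip in the displayed statement: the derivation above yields $B_{2\sigma_2}(2y+1,2y+1)-B_{\sigma_2}(y+1,y+1)^2$ in the second middle bracket, whereas \eqref{eqn:Gruss-ext-beta} prints $\sigma_1$ there, so I would correct that bracket to read $\sigma_2$ (the chain need not hold as written unless $\sigma_1=\sigma_2$). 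A second, harmless, subtlety already noted is that the lower bounds $m=l=0$ are infima not attained on the open interval, which is fine for Lemma~\ref{lemma-Gruss}.
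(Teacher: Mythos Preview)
Your proposal is correct and follows exactly the paper's approach: apply Lemma~\ref{lemma-Gruss} on $[0,1]$ with $h\equiv 1$ and the same choices of $f$ and $g$, then locate the maximum at $t=1/2$ (the paper does this via logarithmic differentiation of $f$ in $t$, you via the substitution $u=t(1-t)$, which is an equivalent one-line computation). Your observation that the second middle bracket should carry $\sigma_2$ rather than $\sigma_1$ is also correct; the paper's own derivation yields $L=4^{-y}e^{-4\sigma_2}$, so the printed $\sigma_1$ there is a typographical slip.
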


\begin{proof} To prove the inequality, it is required to determine the  upper and lower bounds of
\begin{align*}
f(t) &: = t^x (1-t)^x exp \left( -\frac{\sigma_1}{t(1-t)}\right)\\
g(t) &: = t^y (1-t)^y exp \left( -\frac{\sigma_2}{t(1-t)}\right),
\end{align*}
for $t \in [0,1]$, and $x, y , \sigma_1, \sigma_2 >0$. Clearly,  $f(0)=f(1)=0$ and   $g(0)=g(1)=0$. Now for $t \in (0,1)$, the logarithmic differentiation of $f$ yields
\begin{align*}
f'(t) = f(t) (1-2t) \left( \frac{ x t(1-t)+\sigma_1}{t^2 (1-t)^2}\right).
\end{align*}
Since   $f(t)>0 $  and $x t(1-t)+\sigma_1>0$ on  $t \in (0,1)$, $f'(t) >0$  for  $t>1/2$  and  $f'(t) <0$  for  $t<1/2$. This implies
\[ M = \frac{exp(- 4 \sigma_1)}{4^x}.\]
Similarly, it can be shown that
\[L= \frac{exp(- 4 \sigma_2)}{4^y}.\]
Now setting $f$, $g$ as above and  $h(t)=1$ for all $t \in [0,1]$ in Lemma \ref{lemma-Gruss} gives  \eqref{eqn:Gruss-ext-beta}.
\end{proof}

\begin{remark}
Consider the functions
\[ f(t) = t^x,  \quad  g(t)= (1-t)^y \quad  \text{and} \quad h(t) =  t^{x_1-1} (1-t)^{y_1-1}  exp\left(- \frac{\sigma}{t(1-t)}\right),\]
for $t \in [0, 1] $,  $x, y, x_1, y_1 >0$. Clearly  $M=L=1$ and $m=l=0$.  Thus we have the following inequality from Lemma  \ref{lemma-Gruss}:

\begin{align} \label{eqn:Gruss-ext-beta-2} \notag
&\left| B_{\sigma}(x_1, y_1)B_{\sigma}(x+x_1, y+y_1) -  B_{\sigma}(x+x_1, y_1)   B_{\sigma}(x_1, y+y_1) \right| \\ \notag
&\quad\leq \left[B_{\sigma}(x_1, y_1)  B_{\sigma}(2x+x_1, y_1)  -  B^2_{\sigma}(x+x_1, y_1) \right]^{\frac{1}{2}} \\ \notag
& \quad \quad \times
 \left[B_{\sigma}(x_1, y_1)  B_{\sigma}(x_1, 2y+y_1)  -  B^2_{\sigma}(x_1,y+ y_1) \right]^{\frac{1}{2}}\\
&\quad \leq \frac{ B^2_{\sigma}(x_1, y_1)}{4}.
\end{align}

Similarly, if  $f$, $g$ and $h$ defined  as
\[ f(t): = t^m(1-x)^n,  \quad  g(t):= t^p(1-t)^q \quad  \text{and} \quad h(t) :=  t^{\alpha-1} (1-t)^{\beta-1}  exp\left(- \frac{\sigma}{t(1-t)}\right),\]
for $t \in [0,1]$ and $\alpha, \beta, m, n, p, q>0 $, then  ( see \cite{Dragomir-1})   we have
\[ M= \frac{ m^m n^n}{(m+n)^{m+n}} \quad \text{and} \quad L=\frac{ p^p q^q}{(p+q)^{p+q}},\]
hence  the inequality
\begin{align} \label{eqn:Gruss-ext-beta-3} \notag
&\left| B_{\sigma}(\alpha, \beta )B_{\sigma}(\alpha+m+p, \beta+n+q) -  B_{\sigma}(\alpha+m, \beta+n)   B_{\sigma}(\alpha+p, \beta+q) \right| \\ \notag
&\quad\leq \left[B_{\sigma}(\alpha, \beta )  B_{\sigma}(\alpha+2m, \beta+2n)  -  B^2_{\sigma}(\alpha+m, \beta+m) \right]^{\frac{1}{2}} \\ \notag
& \quad \quad \times
 \left[B_{\sigma}(\alpha, \beta )  B_{\sigma}(\alpha+2p, \beta+2q)  -  B^2_{\sigma}(\alpha+p, \beta+q) \right]^{\frac{1}{2}} \\
&\quad \leq \frac{ B^2_{\sigma}(\alpha, \beta )}{4}. \frac{ m^m n^n}{(m+n)^{m+n}}.  \frac{ p^p q^q}{(p+q)^{p+q}},
\end{align}
follows from  Lemma \ref{lemma-Gruss} .

It is evident from Theorem \ref{thm-1},    the inequalities  \eqref{eqn:Gruss-ext-beta-2}  and \eqref{eqn:Gruss-ext-beta-3}, that the results  discussed in \cite{Agarwal, Dragomir-1} for classical beta functions can be replicated for the  extended beta functions. Hence, we can restrict our work which have some influence of the parameter $\sigma$,  like as Theorem \ref{thm-2},  Theorem \ref{thm-4} . However,  Theorem \ref{thm-1} and Theorem \ref{thm-3} are useful to get the inequalities for ECHF and EGHF in the next section.
\end{remark}

\subsection{Inequalities for ECHF and EGHF}

Along with the integral inequalities mention in the previous section, the  following result of Biernacki and Krzy\.z \cite{BK} will be used in the sequel.

\begin{lemma}\label{lemma:1}\cite{BK}
Consider the power series $f(x)=\sum_{n\geq0} a_n x^n$ and $g(x)=\sum_{n\geq 0} b_n x^n$, where $a_n \in \mathbb{R}$ and $b_n > 0$ for all $n$. Further suppose that both series converge on $|x|<r$. If the sequence $\{a_n/b_n\}_{n\geq 0}$ is increasing (or decreasing), then the function $x \mapsto f(x)/g(x)$ is also increasing (or decreasing) on $(0,r)$.
\end{lemma}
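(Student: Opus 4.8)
The plan is to reduce the asserted monotonicity of $x \mapsto f(x)/g(x)$ to the sign of the numerator in the quotient–rule derivative. First I would observe that since $b_n > 0$ for every $n$ (in particular $b_0 > 0$), the series $g$ satisfies $g(x) \geq b_0 > 0$ for all $x \in (0,r)$, so $g$ is non‑vanishing there, $f/g$ is differentiable on $(0,r)$, and
\[
\left(\frac{f}{g}\right)'(x) = \frac{f'(x)g(x) - f(x)g'(x)}{g(x)^2}, \qquad x \in (0,r).
\]
Hence it suffices to prove that $W(x) := f'(x)g(x) - f(x)g'(x) \geq 0$ on $(0,r)$ under the hypothesis that $\{a_n/b_n\}$ is increasing; the decreasing case then follows at once by applying the increasing case to the pair $(-f,g)$.

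Next I would expand $W$ as a power series. Term‑by‑term differentiation and the Cauchy product of two power series are legitimate inside the common disc of convergence, so writing $f'(x) = \sum_{n\geq 0} n a_n x^{n-1}$ and $g'(x) = \sum_{n\geq 0} n b_n x^{n-1}$ gives
\[
W(x) = \sum_{n\geq 0}\sum_{m\geq 0} (n-m)\, a_n b_m\, x^{\,n+m-1} = \sum_{k\geq 1} x^{\,k-1} \sum_{n=0}^{k} (2n-k)\, a_n b_{k-n}.
\]
It therefore remains to show that each coefficient $S_k := \sum_{n=0}^{k}(2n-k)\,a_n b_{k-n}$ is nonnegative. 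Performing the substitution $n \mapsto k-n$ in $S_k$ and averaging the two expressions yields the symmetrized form
\[
S_k = \frac{1}{2}\sum_{n=0}^{k} (2n-k)\bigl(a_n b_{k-n} - a_{k-n} b_n\bigr) = \frac{1}{2}\sum_{n=0}^{k} (2n-k)\, b_n b_{k-n}\left(\frac{a_n}{b_n} - \frac{a_{k-n}}{b_{k-n}}\right).
\]

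Now I would check that every summand is nonnegative: for each $n$, the factor $(2n-k)$ and the factor $\bigl(\tfrac{a_n}{b_n} - \tfrac{a_{k-n}}{b_{k-n}}\bigr)$ carry the same sign, because $2n-k>0$ exactly when $n>k-n$, in which case monotonicity of $\{a_j/b_j\}$ forces $\tfrac{a_n}{b_n}\ge \tfrac{a_{k-n}}{b_{k-n}}$, and similarly for $2n-k<0$ (the term vanishes when $2n=k$). Since $b_n b_{k-n}>0$, it follows that $S_k\ge 0$ for all $k$, hence $W(x)\ge 0$ on $(0,r)$, hence $(f/g)'\ge 0$ there, which proves the claim. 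The computation is essentially routine; the only point requiring care is the bookkeeping in the double sum — correctly collecting the coefficient of $x^{k-1}$ and recognizing that the symmetrization pairs the index $n$ with $k-n$, so that the sign of $(2n-k)$ matches the sign of $\tfrac{a_n}{b_n}-\tfrac{a_{k-n}}{b_{k-n}}$. I would also note that these inequalities are non‑strict, so "increasing" is meant in the weak (non‑decreasing) sense unless $\{a_n/b_n\}$ is strictly monotone, and that the differentiation and Cauchy‑product manipulations are justified precisely because everything happens strictly inside the radius of convergence $r$.
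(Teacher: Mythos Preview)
Your argument is correct. The paper does not supply its own proof of this lemma: it is quoted from Biernacki and Krzy\.z \cite{BK} and used as a black box. Your derivation---quotient rule, Cauchy product for $W(x)=f'(x)g(x)-f(x)g'(x)$, and the symmetrization $n\leftrightarrow k-n$ to show each coefficient $S_k\ge 0$---is exactly the standard proof of the Biernacki--Krzy\.z result, so there is nothing to compare against in the paper itself.
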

We note that the above lemma still holds when both $f$ and $g$ are even, or both are odd functions.

\begin{theorem}\label{thm-4} Let $b \geq 0$ and $d, c >0$. Then follwoing assertions for ECHF are true.
\begin{enumerate}
\item[(i)] For $c \geq d$,  the function  $x \mapsto  \Phi_{\sigma}(b ; c; x)/\Phi_{\sigma}(b ; d; x)$ is increasing on $(0, \infty)$.
\item[(ii)] For $c \geq d$, we have  $d \Phi_{\sigma}(b+1 ; c+1; x) \Phi_{\sigma}(b ; d; x) \geq  c \Phi_{\sigma}(b ; c; x) \Phi_{\sigma}(b+1 ; d+1; x)$.
\item[(iii)]   The function  $x \mapsto  \Phi_{\sigma}(b ; c; x)$ is  log-convex  on $\mathbb{R}$.
\item [(iv)] The function $\sigma \mapsto  \Phi_{\sigma}(b ; c; x)$  is log-convex  on $(0, \infty)$ for fixed $x>0$.
\item[ (v)] Let $\delta >0$. Then  the function
 \[b \mapsto  \frac{B(b, c) \Phi_{\sigma}(b +\delta; c; x)}{B(b+\delta, c) \Phi_{\sigma}(b ; c; x)}\]
 is decreasing  on  $(0, \infty)$   for fixed $c, x>0$.
\end{enumerate}
\end{theorem}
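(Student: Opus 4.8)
The plan is to exploit the series representation \eqref{eqn:ECHF-1} together with Lemma \ref{lemma:1} (Biernacki--Krzy\.z) for the monotonicity statements, and the log-convexity results for $B_\sigma$ from Theorem \ref{thm-2} and Theorem \ref{thm-3} for the convexity statements. Throughout, write $\Phi_\sigma(b;c;x)=\sum_{n\geq0} A_n(c)\, x^n/n!$ with $A_n(c):=B_\sigma(b+n,c+n)/B(b,c-b)$, so that each coefficient is positive when $c>b>0$ (positivity of the extended beta integrand).

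For part (i), I would apply Lemma \ref{lemma:1} with $a_n=A_n(c)/n!$ and $b_n=A_n(d)/n!$; it suffices to show $n\mapsto A_n(c)/A_n(d)$ is increasing, i.e. that
\begin{align*}
\frac{B_\sigma(b+n,c+n)}{B_\sigma(b+n,d+n)}\le \frac{B_\sigma(b+n+1,c+n+1)}{B_\sigma(b+n+1,d+n+1)}.
\end{align*}
This is exactly the form to which Theorem \ref{thm-1} applies: taking $x=b+n+1$, $x_1=b+n$, $y=c+n+1$, $y_1=d+n$ (or a suitable relabeling), the hypothesis $(x-x_1)(y-y_1)=(1)(c-d+1)\ge0$ holds since $c\ge d$, giving $B_\sigma(x,y_1)B_\sigma(x_1,y)\le B_\sigma(x_1,y_1)B_\sigma(x,y)$, which rearranges to the claimed ratio monotonicity. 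Part (ii) then follows from part (i): since $\Phi_\sigma(b;c;x)/\Phi_\sigma(b;d;x)$ is increasing on $(0,\infty)$, its derivative is nonnegative, and computing the derivative using the differentiation formula $\frac{d}{dx}\Phi_\sigma(b;c;x)=\frac{b}{c}\Phi_\sigma(b+1;c+1;x)$ (which follows termwise from \eqref{eqn:ECHF-1} and the shift $B_\sigma(b+n+1,c+n+1)$, together with $(b)_n\cdot(b+n)=(b)_{n+1}$ bookkeeping) yields precisely $d\,\Phi_\sigma(b+1;c+1;x)\Phi_\sigma(b;d;x)\ge c\,\Phi_\sigma(b;c;x)\Phi_\sigma(b+1;d+1;x)$ after clearing the positive factor $bx$.

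For parts (iii) and (iv), I would use the integral representation \eqref{eqn:ECHF-integral}. For (iv), with $x>0$ fixed, the map $\sigma\mapsto t^{b-1}(1-t)^{c-b-1}\exp(xt-\sigma/(t(1-t)))$ is, for each $t$, log-linear in $\sigma$, so writing $\sigma=\alpha\sigma_1+(1-\alpha)\sigma_2$ and applying the H\"older--Rogers inequality with exponents $1/\alpha$, $1/(1-\alpha)$ exactly as in the proof of Theorem \ref{thm-2} gives $\Phi_{\alpha\sigma_1+(1-\alpha)\sigma_2}(b;c;x)\le \Phi_{\sigma_1}(b;c;x)^\alpha\Phi_{\sigma_2}(b;c;x)^{1-\alpha}$. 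For (iii), log-convexity in $x$ on $\mathbb{R}$: one approach is to note $\Phi_\sigma(b;c;x)=\sum_n c_n x^n$ with $c_n=A_n(c)/n!>0$, and show the coefficients satisfy $c_n^2\le c_{n-1}c_{n+1}$ (a discrete log-convexity / Tur\'an condition on the power-series coefficients); this in turn reduces to $B_\sigma(b+n,c+n)^2\le B_\sigma(b+n-1,c+n-1)B_\sigma(b+n+1,c+n+1)\cdot\frac{(n+1)}{n}\cdot(\text{combinatorial factor})$, and the beta-function Tur\'an inequality here is supplied by Theorem \ref{thm-3} applied along the diagonal direction $(x_1,y_1)=(b+n+1,c+n+1)$, $(x_2,y_2)=(b+n-1,c+n-1)$. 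Alternatively and more cleanly, differentiate \eqref{eqn:ECHF-integral} twice under the integral sign and apply the Cauchy--Schwarz inequality to $\big(\int t^{b-1}(1-t)^{c-b-1}e^{xt-\sigma/(t(1-t))}\,dt\big)\big(\int t^{2}\cdots\big)\ge\big(\int t\cdots\big)^2$, which gives $\Phi_\sigma\cdot\Phi_\sigma''\ge(\Phi_\sigma')^2$, i.e. $(\log\Phi_\sigma)''\ge0$; I would favor this integral route since it covers all real $x$ directly.

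Part (v) is the one I expect to be the main obstacle. The claim is that $b\mapsto B(b,c)\Phi_\sigma(b+\delta;c;x)/\big(B(b+\delta,c)\Phi_\sigma(b;c;x)\big)$ is decreasing. Writing out the normalization, $B(b,c)\Phi_\sigma(b;c;x)=\sum_n \frac{B_\sigma(b+n,c+n)}{B(b,c-b)}\cdot B(b,c)\,\frac{x^n}{n!}$ — here one must be careful about which beta normalization appears; the cleanest reformulation is to set $\widetilde\Phi_\sigma(b;c;x):=\sum_n B_\sigma(b+n,c+n)\,x^n/n!$ (an unnormalized ECHF), so that the quotient in (v) becomes a ratio $\widetilde\Phi_\sigma(b+\delta;c;x)/\widetilde\Phi_\sigma(b;c;x)$ up to a $b$-dependent constant that I would need to track carefully. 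Then by Lemma \ref{lemma:1} it suffices to prove $n\mapsto B_\sigma(b+\delta+n,c+n)/B_\sigma(b+n,c+n)$ is decreasing in $n$, equivalently
\begin{align*}
\frac{B_\sigma(b+\delta+n+1,c+n+1)}{B_\sigma(b+n+1,c+n+1)}\le \frac{B_\sigma(b+\delta+n,c+n)}{B_\sigma(b+n,c+n)},
\end{align*}
which again is a four-parameter log-convexity inequality of the type in Theorem \ref{thm-1}: with $x=b+\delta+n$, $x_1=b+n$, $y=c+n+1$, $y_1=c+n$ we get $(x-x_1)(y-y_1)=\delta\cdot1\ge0$, so $B_\sigma(x,y_1)B_\sigma(x_1,y)\le B_\sigma(x_1,y_1)B_\sigma(x,y)$, which is the required monotonicity. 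The genuinely delicate point is not this inequality but verifying that the extra factor $B(b,c)/B(b+\delta,c)$ — or whatever constant the intended normalization produces — does not spoil monotonicity in $b$; I would handle this by checking that this factor's logarithmic derivative in $b$ combines with the (already established) monotonicity of the series ratio in the right direction, using the integral representation $\psi(b)-\psi(b+\delta)<0$ type estimates for the digamma function. If the constant factor turns out to work against the series monotonicity, the fallback is to instead prove the stronger statement that the full quotient is a ratio of two log-convex functions of $b$ (via Theorem \ref{thm-3} applied termwise in the $b$-direction) and invoke a Chebyshev-type argument.
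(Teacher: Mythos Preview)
For parts (i)--(iv) your plan matches the paper's, with two harmless variants: in (iii) you use Cauchy--Schwarz on \eqref{eqn:ECHF-integral} to obtain $\Phi_\sigma\Phi_\sigma''\ge(\Phi_\sigma')^2$ for all real $x$ in one stroke, whereas the paper applies H\"older to \eqref{eqn:ECHF-integral} for $x\ge0$ and then invokes the relation $\Phi_\sigma(b;c;-x)=e^{-x}\Phi_\sigma(c-b;c;x)$ to cover $x<0$; in (iv) you apply H\"older to the integral, while the paper argues termwise from Theorem~\ref{thm-2} together with closure of log-convexity under summation. One caveat in (i): you copied the coefficient from \eqref{eqn:ECHF-1} as $B_\sigma(b+n,c+n)$, but that display carries a misprint --- expanding $e^{xt}$ in \eqref{eqn:ECHF-integral} shows the second argument is $c-b$, independent of $n$, and this is exactly what the paper uses in its own proof. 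With your $A_n(c)$ the four values needed do not lie on a ``rectangle'' $\{x,x_1\}\times\{y,y_1\}$ (there are four distinct second arguments $c+n$, $d+n$, $c+n+1$, $d+n+1$), so no relabeling makes Theorem~\ref{thm-1} applicable; with the corrected coefficient the second argument is fixed at $c-b$ or $d-b$, the rectangular pattern is restored, and your argument becomes the paper's verbatim (with $x=b+n$, $x_1=b+n+1$, $y=d-b$, $y_1=c-b$).

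The genuine gap is in (v). Lemma~\ref{lemma:1} controls the monotonicity of a power-series quotient \emph{in the variable $x$}, never in a parameter; establishing that $n\mapsto B_\sigma(b+\delta+n,\cdot)/B_\sigma(b+n,\cdot)$ is monotone would only show that $x\mapsto\widetilde\Phi_\sigma(b+\delta;c;x)/\widetilde\Phi_\sigma(b;c;x)$ is monotone in $x$, which is not what (v) asserts. The paper abandons the series here and works with the integral \eqref{eqn:ECHF-integral}. For $b'\ge b$ it sets $p(t)=t^{b'-1}(1-t)^{c-b'-1}\exp\bigl(xt-\sigma/(t(1-t))\bigr)$, $f(t)=(t/(1-t))^{\,b-b'}$ and $g(t)=(t/(1-t))^{\delta}$, so that $\int p$, $\int pf$, $\int pg$, $\int pfg$ are precisely the four unnormalized ECHF integrals at $b'$, $b$, $b'+\delta$, $b+\delta$; the beta factors in the statement are therefore absorbed automatically and no separate digamma bookkeeping is needed. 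Since $f$ is decreasing and $g$ increasing on $(0,1)$, Chebyshev's integral inequality \eqref{eqn:chebyshev-1} in its asynchronous form compares $\int pfg\cdot\int p$ with $\int pf\cdot\int pg$ and delivers the sign of the difference of the two quotients directly. Your closing ``fallback \dots\ Chebyshev-type argument'' is in fact the whole proof, but it must be run on \eqref{eqn:ECHF-integral}, not on any series identity or termwise log-convexity.
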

\begin{proof}
From the definition of ECHF, it follows that
\begin{align}
\frac{\Phi_{\sigma}(b ; c; x)}{\Phi_{\sigma}(b ; d; x)} = \frac{ \sum_{n=0}^\infty \alpha_n(c) x^n } { \sum_{n=0}^\infty \alpha_n(d) x^n}, \quad \text{where} \quad  \alpha_n(t):=\frac{B_\sigma(b+n, t-b)}{B(b, t-b) n!}.
\end{align}
If we denote $f_n= \alpha_n(c)/ \alpha_n(d)$, then
\begin{align*}
 f_n- f_{n+1}
& = \frac{\alpha_n(c)}{ \alpha_n(d)}-\frac{\alpha_{n+1}(c)}{ \alpha_{n+1}(d)}\\
&= \frac{B(b, d-b)}{B(b, c-b)} \left( \frac{B_\sigma(b+n, c-b)}{B_\sigma(b+n, d-b)}-\frac{B_\sigma(b+n+1, c-b)}{B_\sigma(b+n+1, d-b)}\right).
\end{align*}
Now set $x:= b+n;$  $y:=d-b; $  $ x_1:=b+n+1$ and  $y_1:=c-b$ in  \eqref{eqn:exb-1}.  Since $(x-x_1)(y-y_1)=c-d \geq 0$, it follows from Theorem \ref{thm-1}  that
\[ \frac{B_\sigma(b+n, c-b)}{B_\sigma(b+n, d-b)} \leq \frac{B_\sigma(b+n+1, c-b)}{B_\sigma(b+n+1, d-b)},\]
which is equivalent to say that the sequence $\{f_n\}$ is increasing and appealing to Lemma \ref{lemma:1} ,  we can conclude that
$x \mapsto  \Phi_{\sigma}(b ; c; x)/\Phi_{\sigma}(b ; d; x)$ is increasing on $(0, \infty)$.

To prove $(ii)$, we need to recall the following identity from  \cite[Page 594] {Chaudhry-2} :
\begin{align}\label{eqn:diff-ext-echf}
\frac{d^n}{d x^n}  \Phi_{\sigma}(b ; c; x)= \frac{(b)_n}{(c)_n} \Phi_{\sigma}(b+n ; c+n; x).
\end{align}
Now the increasing properties of  $x \mapsto  \Phi_{\sigma}(b ; c; x)/\Phi_{\sigma}(b ; d; x)$ is equiavlent to
\begin{align}
\frac{d}{dx} \left(\frac{\Phi_{\sigma}(b ; c; x)}{\Phi_{\sigma}(b ; d; x)}\right)  \geq 0.
\end{align}
This together  with \eqref{eqn:diff-ext-echf}  implies
\begin{align*}
&\Phi'_{\sigma}(b ; c; x)  \Phi_{\sigma}(b ; d; x) -\Phi_{\sigma}(b ; c; x)  \Phi'_{\sigma}(b ; d; x)\\
& = \frac{b}{c} \Phi_{\sigma}(b+1 ; c+1; x) \Phi_{\sigma}(b ; d; x) - \frac{b}{d}\Phi_{\sigma}(b ; c; x)  \Phi_{\sigma}(b+1 ; d+1; x) \geq 0.
\end{align*}
A simple computation prove the assertion.

 The log-convexity of  $x \mapsto  \Phi_{\sigma}(b ; c; x)$ can be proved by using the integral representation of  ECHF as given in \eqref{eqn:ECHF-integral}   and by appealing to
the  H$\ddot{\text{o}}$lder-Rogers inequality for integrals as follows:
\begin{align*}
\Phi_\sigma(b ; c; \alpha x+ (1-\alpha) y)
 &=\frac{1}{B(b, c-b)}  \int_0^1 t^{b-1} (1-t)^{c-b-1} exp \left( \alpha x t+ (1-\alpha) y t - \tfrac{\sigma}{t(1-t)}\right)  dt\\
&=\frac{1}{B(b, c-b)}  \int_0^1\bigg[\left( t^{b-1} (1-t)^{c-b-1} exp \left(x t - \tfrac{\sigma}{t(1-t)}\right)\right)^\alpha \bigg.\\
& \hspace{1.5in} \bigg.\times  \left( t^{b-1} (1-t)^{c-b-1} exp \left( y t- \tfrac{\sigma}{t(1-t)}\right) \right)^{1-\alpha} \bigg] dt\\
&\leq \bigg[\frac{1}{B(b, c-b)}  \int_0^1 t^{b-1} (1-t)^{c-b-1} exp \left(x t - \tfrac{\sigma}{t(1-t)}\right) dt  \bigg]^\alpha \\
& \hspace{.4in} \times \bigg[\frac{1}{B(b, c-b)}  \int_0^1  t^{b-1} (1-t)^{c-b-1} exp \left( y t- \tfrac{\sigma}{t(1-t)}\right) dt \bigg]^{1-\alpha} \\
&= \left(\Phi_\sigma(b ; c;  x)\right)^\alpha  \left(\Phi_\sigma(b ; c;  y)\right)^{1-\alpha} ,
\end{align*}
here $ x, y \geq 0$ and $\alpha \in [0,1]$. This prove that $x \mapsto  \Phi_{\sigma}(b ; c; x)$ is log-convex for $x\geq 0$. For the case $x <0$, the assertion follows immediately from the identity (see.\cite[Page 596]{Chaudhry-2}):
\[ \Phi_{\sigma}(b ; c; -x)= e^{-x} \Phi_{\sigma}(c-b; c; x). \]

It is known that the infinite sum of log-convex functions is also log-convex. Thus, the log-convexity of  $\sigma \mapsto  \Phi_{\sigma}(b ; c; x)$  is equivalent to show that $\sigma \mapsto B_\sigma(b+n, c-b)$ is log-convex on $(0, \infty)$ and for all non-negative integer $n$.  From Theorem \ref{thm-2} , it is clear that $\sigma \mapsto B_\sigma(b+n, c-b)$ is log-convex for $c>b>0$ and hence  (iv) is true.

 Let $b' \geq b$. Set  $p(t):=  t^{b'-1} (1-t)^{c-b'-1} exp \left(x t- \tfrac{\sigma}{t(1-t)}\right)$,
\[
  f(t):= \left(\frac{t}{1-t}\right)^{b-b'} \quad  \text{and} \quad g (t):= \left(\frac{t}{1-t}\right)^{\delta}.
\]
Then using the integral  representation \eqref{eqn:ECHF-integral} of ECHF, we have
\begin{align}\label{eqn:22}
\frac{B(b, c) \Phi_\sigma(b+\delta ; c; x) }{B(b+\delta, c)  \Phi_\sigma(b ; c; x)} - \frac{B(b', c) \Phi_\sigma(b'+\delta ; c; x) }{B(b'+\delta, c)  \Phi_\sigma(b' ; c; x)}
=\frac{\int_0^1 f(t) g(t) p(t)  dt}{\int_0^1 f(t) p(t) dt } - \frac{\int_0^1 g(t) p(t) dt}{ \int_0^1p(t) dt }.
\end{align}
It is easy to determine that for $b' \geq b$, the function $f$  is decreasing while as $ \delta \geq 0$, the function $g$ is increasing. Since $p$ is non-negative for $t \in [0,1]$, by the reveres
Chebyshev's integral inequality  \eqref{eqn:chebyshev-1}, it follows that
\begin{align}
\int_0^1 p(t) f(t) dt \int_0^1 p(t) g(t) dt \leq \int_0^1 p(t)  dt \int_0^1 p(t)f(t) g(t) dt.
\end{align}
This together with  \eqref{eqn:22} implies
\begin{align*}
\frac{B(b, c) \Phi_\sigma(b+\delta ; c; x) }{B(b+\delta, c)  \Phi_\sigma(b ; c; x)} - \frac{B(b', c) \Phi_\sigma(b'+\delta ; c; x) }{B(b'+\delta, c)  \Phi_\sigma(b' ; c; x)}
\geq 0,
\end{align*}
which is equivalent to say that the function
 \[b \mapsto  \frac{B(b, c) \Phi_{\sigma}(b +\delta; c; x)}{B(b+\delta, c) \Phi_{\sigma}(b ; c; x)}\]
is decreasing on $(0, \infty)$.
\end{proof}

\begin{remark}
In particular,   the decreasing properties of
 \[b \mapsto  \frac{B(b, c) \Phi_{\sigma}(b +\delta; c; x)}{B(b+\delta, c) \Phi_{\sigma}(b ; c; x)}\]
is equivalent to the inequality
\begin{align}\label{eqn-23}
 \Phi_\sigma^2(b+\delta ; c; x) \geq \frac{ B^2(b+\delta, c)}{ B(b+2 \delta, c) B(b, c)}  \Phi_\sigma(b+2\delta ; c; x) \Phi_\sigma(b; c; x).
\end{align}
Now define
\[ f(\delta) :=  \frac{ B^2(b+\delta, c)}{ B(b+2 \delta, c) B(b, c)}= \frac{ (\Gamma(b + \delta))^2 \Gamma(b + 2 \delta+c) \Gamma(b+c)}{(\Gamma( b+c + \delta))^2 \Gamma(b+ 2 \delta) \Gamma(b)}.\]
A logarithmic differentiation of $f$ yields
\begin{align*}
\frac{f'(\delta)}{f(\delta)} = 2 \psi (b + \delta) + 2 \psi (b + 2 \delta+ c)- 2 \psi (b+c + \delta) - 2 \psi (b + 2 \delta),
\end{align*}
where  $y \mapsto\psi(y)=\Gamma'(y)/\Gamma(y)$ is  the digamma function which is  increasing on $(0,\infty)$ and it
 has the series form
\[\psi(y)=-\gamma+ \sum_{k\geq 0}\left(\frac{1}{k}-\frac{1}{y+k}\right).\]

This implies
\begin{align*}
\frac{f'(\delta)}{f(\delta)}
 &= 2 \sum_{k=0}^\infty \left( \frac{1}{b+c+\delta+k}-\frac{1}{b+c+2\delta+k}\right)+ 2 \sum_{k=0}^\infty \left( \frac{1}{b+\delta+k}-\frac{1}{b+2\delta+k}\right)\\
&= 2 \delta \sum_{k=0}^\infty \left( \frac{1}{(b+c+\delta+k)(b+c+2\delta+k) }-  \frac{1}{(b+\delta+k)(b+2\delta+k)}\right)\\
&= - 2 \delta \sum_{k=0}^\infty \frac{c(2b+3 \delta+ 2 k+c)}{(b+c+\delta+k)(b+c+2\delta+k) (b+\delta+k)(b+2\delta+k)} \leq 0.
\end{align*}
Thus, $f$ is decreasing functions of $\delta$ on $[0, \infty)$ and $f(\delta) \leq f(0)=1$.

Interestingly, for  $\sigma=0$ the inequality in \eqref{eqn-23} reduce  to the  Tur\`an type inequality of classical confluent hypergeometric functions as
\begin{align}\label{eqn-24}
{}_1F_{1}^2(b+\delta ; c; x) \geq \frac{ B^2(b+\delta, c)}{ B(b+2 \delta, c) B(b, c)}  {}_1F_{1}(b+2\delta ; c; x) {}_1F_{1}(b; c; x).
\end{align}
By the fact that
 \[\frac{ B^2(b+\delta, c)}{ B(b+2 \delta, c) B(b, c)} \leq 1\]
it can be conclude that the inequality \eqref{eqn-24} is an  improvement of the inequality given in \cite[Theorem 4(b)]{KS} for fixed $c, x>0$. However, our result does not expound the other cases  in \cite[Theorem 4(b)]{KS}.

Now following the  remark given in \cite[Page-390]{KS} for integer $\delta$ and $b=\delta+a$  in  \eqref{eqn-24},  will also improve the  inequality \cite[Theorem 1, Corollary 2]{BGR} for classical confluent  hypergeometric functions.
\end{remark}

Our next result is on the extended Gaussian hypergeometric functions(EGHF).

\begin{theorem} Let $b \geq 0$ and $d, c >0$. Then follwoing assertions for EGHF are true.
\begin{enumerate}
\item[(i)] For $c \geq d$,  the function  $x \mapsto  F_{\sigma}(a, b ; c; x)/F_{\sigma}(a, b ; d; x)$ is increasing on $(0, 1)$.
\item[(ii)] For $c \geq d$, we have  \[d F_{\sigma}(a+1, b+1 ; c+1; x) F_{\sigma}(a, b ; d; x) \geq  c F_{\sigma}(a+1, b+1 ; d+1; x) F_{\sigma}(a, b ; c; x).\]
\item[(iii)]   The function $\sigma \mapsto  F_{\sigma}(a, b ; c; x)$  is log-convex  on $(0, \infty)$ for fixed $b>0, c>0$  and $x \in (0, 1)$.
\item[(iv)]  The function $a \mapsto  F_{\sigma}(a, b ; c; x)$  is log-convex  on $(0, \infty)$ and  for fixed $x \in (0, 1)$.
\end{enumerate}
\end{theorem}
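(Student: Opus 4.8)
The proof follows the template of Theorem~\ref{thm-4}, using the series representation \eqref{eqn:EGHF-1}, the integral representation \eqref{eqn:EGHF-integral}, and the results of Section~2.1 together with Lemma~\ref{lemma:1}; throughout one takes $a>0$ so that the series coefficients are positive.

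For part~(i), I would write
\[
\frac{F_{\sigma}(a,b;c;x)}{F_{\sigma}(a,b;d;x)}=\frac{\sum_{n\ge 0}\beta_n(c)\,x^n}{\sum_{n\ge 0}\beta_n(d)\,x^n},\qquad \beta_n(t):=\frac{B_\sigma(b+n,t-b)}{B(b,t-b)}\,\frac{(a)_n}{n!},
\]
and observe that
\[
\frac{\beta_n(c)}{\beta_n(d)}=\frac{B(b,d-b)}{B(b,c-b)}\cdot\frac{B_\sigma(b+n,c-b)}{B_\sigma(b+n,d-b)}.
\]
Applying Theorem~\ref{thm-1} with $x=b+n$, $y=d-b$, $x_1=b+n+1$, $y_1=c-b$ --- for which $(x-x_1)(y-y_1)=c-d\ge 0$ --- shows that $n\mapsto B_\sigma(b+n,c-b)/B_\sigma(b+n,d-b)$ is increasing, hence $\{\beta_n(c)/\beta_n(d)\}$ is increasing, and Lemma~\ref{lemma:1} yields the monotonicity of the quotient on $(0,1)$.

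For part~(ii), I would first record the differentiation formula
\[
\frac{d}{dx}F_\sigma(a,b;c;x)=\frac{ab}{c}\,F_\sigma(a+1,b+1;c+1;x),
\]
obtained by differentiating \eqref{eqn:EGHF-integral} under the integral sign and using $B(b+1,c-b)=\tfrac{b}{c}B(b,c-b)$; this is the EGHF analogue of \eqref{eqn:diff-ext-echf}. Then the inequality $\tfrac{d}{dx}\big(F_{\sigma}(a,b;c;x)/F_{\sigma}(a,b;d;x)\big)\ge 0$ from part~(i), cleared of the common positive factor $ab$ and of denominators, is precisely the asserted inequality. For part~(iii), for $x\in(0,1)$ each term $\sigma\mapsto\frac{B_\sigma(b+n,c-b)}{B(b,c-b)}\frac{(a)_n}{n!}x^n$ of \eqref{eqn:EGHF-1} is a positive constant multiple of $\sigma\mapsto B_\sigma(b+n,c-b)$, which is log-convex by Theorem~\ref{thm-2}; since a convergent sum of positive log-convex functions is log-convex, (iii) follows.

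Part~(iv) is the delicate one, since the termwise argument of (iii) fails: $a\mapsto(a)_n=\Gamma(a+n)/\Gamma(a)$ is log-\emph{concave} (its second logarithmic derivative $\psi'(a+n)-\psi'(a)$ is non-positive). Instead I would argue from \eqref{eqn:EGHF-integral}: for $t\in(0,1)$ and $x\in(0,1)$ we have $0<1-xt<1$, so $\ln\big((1-xt)^{-a}\big)=-a\ln(1-xt)$ is linear, hence convex, in $a$, and therefore the integrand $t^{b-1}(1-t)^{c-b-1}(1-xt)^{-a}\exp(-\sigma/(t(1-t)))$ is log-convex in $a$ for every fixed $t$. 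Using the decomposition
\[
(1-xt)^{-(\alpha a_1+(1-\alpha)a_2)}=\big((1-xt)^{-a_1}\big)^{\alpha}\big((1-xt)^{-a_2}\big)^{1-\alpha}
\]
and the H\"older--Rogers inequality with exponents $1/\alpha$ and $1/(1-\alpha)$ then gives $F_\sigma(\alpha a_1+(1-\alpha)a_2,b;c;x)\le F_\sigma(a_1,b;c;x)^{\alpha}F_\sigma(a_2,b;c;x)^{1-\alpha}$. The two points I expect to need care are deriving the EGHF differentiation formula cleanly and, in (iv), recognising that one must pass from the series to the integral representation rather than trying to argue coefficient by coefficient.
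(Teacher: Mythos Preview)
Your treatment of (i)--(iii) matches the paper exactly: the paper simply says these follow the template of Theorem~\ref{thm-4} with the series \eqref{eqn:EGHF-1} and integral \eqref{eqn:EGHF-integral} in place of their ECHF counterparts, and your write-up fills in those details correctly (including the EGHF derivative formula used for (ii)).

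For (iv), however, you take a genuinely different route from the paper. The paper argues via the coefficient criterion of Karp and Sitnik: writing $F_\sigma(a,b;c;x)=\sum_{n\ge 0} f_n \tfrac{(a)_n}{n!}x^n$ with $f_n=B_\sigma(b+n,c-b)/B(b,c-b)$, log-convexity in $a$ follows once $\{f_n/f_{n-1}\}$ is shown to be increasing, and that in turn is obtained from the Tur\'an-type inequality $B_\sigma^2(b+n-1,c-b)\le B_\sigma(b+n,c-b)B_\sigma(b+n-2,c-b)$, a special case of Theorem~\ref{thm-3}. Your approach instead stays with the integral representation and applies H\"older--Rogers directly to $(1-xt)^{-a}$, exploiting that this factor is log-linear in $a$ for each fixed $t\in(0,1)$ and $x\in(0,1)$. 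Your argument is more self-contained (it avoids quoting the Karp--Sitnik result) and in fact works for all real $a$, not just $a>0$; the paper's approach has the virtue of tying (iv) back to the log-convexity of the extended beta function established in Section~2.1, which is the organizing theme of the paper. Both proofs are correct, and your remark that the naive termwise argument fails because $a\mapsto(a)_n$ is log-concave is a useful diagnostic that the paper does not make explicit.
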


\begin{proof}
The case   $(i)$-$(iii)$ can  be proved by following the proof of Theorem \ref{thm-4}  by considering the series form
\eqref{eqn:EGHF-1} and  an integral representation
\eqref{eqn:EGHF-integral}  of EGHF , We omit the details.

From a result of  Karp and Sitnik \cite{KS},  we know that if we let
\[f(a,x)=\sum_{n\geq0}f_n \frac{(a)_n}{n!} x^n,\]
where $f_n$ (and is independent of $a$) and we suppose that $a'>a>0$, $\delta >0,$ then the function
\[f(a+\delta,x)f(b,x)-f(b+\delta,x)f(a,x)=\sum_{m\geq2}\phi_m x^m\]
has negative  power series coefficient $\phi_m < 0$ so that $a \mapsto f(a,x)$ is strictly log-convex for $x>0$ if the sequence $\left\{f_n/f_{n-1}\right\}$ is increasing. In what follows we shall use this result for the function $F_\sigma(a, b; c; x).$ For this let
\[f_n=\frac{ B_\sigma(b+n, c-b)}{B(b, c-b)}  .\]
Thus, to prove $(iv)$, it is enough to show that the sequence $d_n = f_n/ f_{n-1}$ is decreasing. Clearly
\[d_n- d_{n-1} = \frac{B_\sigma(b+n, c-b)}{B_\sigma(b+n-1, c-b)}- \frac{B_\sigma(b+n-1, c-b)}{B_\sigma(b+n-2, c-b)}.\]
Now if we replace  $x_1, y_1, x_2, y_2$ in \eqref{eqn1} by  $x_1=b+n$, $x_2=b+n-2$, and $y_1=y_2=c-b$, then  it  follows that $ d_n \geq d_{n-1}$. Hence the conclusion.
\end{proof}
\section{Conclusion}
This article proves several properties of the extended beta functions resembles with the classical beta functions. Few of those properties are key to establish inequalities for ECHF and EGHF. Using classical integral inequalities, Tur\'an type and  reverse
Tur\'an type inequality for ECHF and EGHF are also given.

\section*{List of abbreviations}
\text{ECHF: Extended confluent hypergeometric functions,}\\
\text{EGHF: Extended Gaussian hypergeometric functions.}

\paragraph{\bf Competing interests.}
The authors declare that they have no competing interests

\paragraph{\bf Authors details.}
Assistant Professor, Department of Mathematics and Statistics, Collage of Science,
King Faisal University, Al-Hasa 31982, Hofuf, Saudi Arabia.
\email{smondal@kfu.edu.sa}

\paragraph{\bf Acknowledgements.} The work was supported by the Deanship of Scientific Research, King Faisal University, Saudi Arabia through the project no- $150244$.

 \end{document}